\newtheorem{defn}{Definition}
\newtheorem{prop}{Proposition}
\newtheorem{lem}{Lemma}
\newtheorem{remark}{Remark}
\newtheorem*{property*}{Property}
\theoremstyle{remark}
\newtheorem*{sketch*}{Sketch of Proof}
\title{A note concerning the invertibility of certain alternant matrices}
\author{Jeff Ledford}
\address{Longwood University, Farmville, VA 23909, USA}
\email{ledfordjp@longwood.edu}
\thanks{This work was supported by the PRISM program at Longwood University.}
\begin{document}

\maketitle

\section{Introduction}

This short note details an elementary method to show that certain alternant matrices are invertible.  An alternant matrix takes the form $[ f_i(x_j)] $ for some sequence of functions $(f_i:1\leq i\leq N)$ and domain values $(x_j:1\leq j\leq N)$.  The classical example of such a matrix is the Vandermonde matrix 
\[
V_N:=\begin{bmatrix}
x_{i}^{j-1}
\end{bmatrix}_{1\leq i,j\leq N}.
\]
The result of interest here is that $V_N$ is invertible whenever $(x_j:1\leq j\leq N)$ consists of distinct points.  Fixing $N\in\mathbb{N}$, then we can view this result as following from the fundamental theorem of algebra.  Indeed, the homogeneous system $V_N\mathbf{a}= \mathbf{0}$ yields $[f_{\mathbf{a}}(x_i)]=\mathbf{0}$ for some $f_{\mathbf{a}}\in\Pi_{N-1}$.  Hence the only solution is the trivial solution $\mathbf{a}=\mathbf{0}$.  

In the next section we will (slightly) expand upon this idea and introduce notation.  The final section contains examples.

\section{General Alternant Systems}

Suppose we have a set of continuous functions $G:=\{g_1,g_2,\dots,g_N  \},$
where for $1\leq j\leq N$, $g_j:I\to\mathbb{R}$ for some interval $I\subset\mathbb{R}$.  Let $\mathcal{G}:=\text{span}(G)$
and for $f\in\mathcal{G}$, let $f^{\sharp}$ denote the number of roots that $f$ has on $I$, and $\mathcal{G}^\sharp=\sup_{f\in \mathcal{G}\setminus\{\mathbf{0}\}}f^{\sharp}$.
Our first result is straightforward.

\begin{lem}\label{alternant_inverse}
Let $N\in\mathbb{N}$ and suppose that $G$ satisfies $\mathcal{G}^\sharp<N$ and $X=(x_i:1\leq i\leq N)\subset I$ consists of $N$ distinct points.  Then the alternant matrix
\[
A(G,X):=\left[ g_j(x_i)  \right]_{1\leq i,j\leq N}
\]
is invertible.
\end{lem}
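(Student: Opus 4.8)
The plan is to reduce invertibility of $A(G,X)$ to a statement about its null space and then exploit the hypothesis $\mathcal{G}^\sharp < N$ in exactly the way the Vandermonde argument of the introduction exploits the fundamental theorem of algebra. Since $A(G,X)$ is square, it suffices to show that the homogeneous system $A(G,X)\mathbf{a} = \mathbf{0}$ admits only the trivial solution $\mathbf{a} = \mathbf{0}$.

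First I would fix a solution $\mathbf{a} = (a_1,\dots,a_N)^T$ of $A(G,X)\mathbf{a}=\mathbf{0}$ and unpack what each of the $N$ scalar equations asserts. The $i$-th equation reads $\sum_{j=1}^N a_j g_j(x_i) = 0$. The key observation is that these are precisely the statements that the single function $f_{\mathbf{a}} := \sum_{j=1}^N a_j g_j \in \mathcal{G}$ vanishes at each of the points $x_1,\dots,x_N$. Because $X$ consists of $N$ distinct points, $f_{\mathbf{a}}$ possesses at least $N$ roots on $I$, i.e.\ $f_{\mathbf{a}}^\sharp \geq N$.

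Next I would invoke the hypothesis on $\mathcal{G}^\sharp$. If $f_{\mathbf{a}}$ were a nonzero element of $\mathcal{G}$, then the bound $f_{\mathbf{a}}^\sharp \geq N$ would contradict $f_{\mathbf{a}}^\sharp \leq \mathcal{G}^\sharp < N$; hence $f_{\mathbf{a}}$ must be the zero function of $\mathcal{G}$. The step I expect to be the genuine obstacle is passing from $f_{\mathbf{a}} \equiv 0$ back to $\mathbf{a} = \mathbf{0}$, since this is exactly the linear independence of $g_1,\dots,g_N$. I would point out that this is supplied by the standing setup, in which $G$ furnishes $N$ functions spanning the space $\mathcal{G}$ with $\dim \mathcal{G} = N$; it is worth flagging that independence does \emph{not} follow from $\mathcal{G}^\sharp < N$ alone, since a dependent family drawn from a lower-dimensional space can still meet the root bound (taking $g_1 = g_2 \equiv 1$ with $N=2$ gives $\mathcal{G}^\sharp = 0 < 2$ yet a singular alternant). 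Granting independence, $f_{\mathbf{a}} \equiv 0$ forces $a_j = 0$ for every $j$, so $\mathbf{a} = \mathbf{0}$ and $A(G,X)$ is invertible.
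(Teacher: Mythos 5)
Your proposal is correct and follows essentially the same route as the paper: the paper's proof likewise considers the homogeneous system $A(G,X)\mathbf{a}=\mathbf{0}$ and derives from a nontrivial solution a function $f_0\in\mathcal{G}$ with $N$ roots, contradicting $\mathcal{G}^\sharp<N$. The one place you go beyond the paper is your handling of the case $f_{\mathbf{a}}\equiv\mathbf{0}$: the paper tacitly assumes that a nontrivial coefficient vector yields a \emph{nonzero} element of $\mathcal{G}$, i.e., that $g_1,\dots,g_N$ are linearly independent, and your example ($g_1=g_2\equiv 1$, $N=2$) correctly shows that this does not follow from $\mathcal{G}^\sharp<N$ alone, since the zero function is excluded from the supremum defining $\mathcal{G}^\sharp$. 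So your version is, if anything, a more careful account of the hypothesis the lemma actually needs.
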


\begin{proof}
Consider the product $A(G,X)\mathbf{a}$ in the variable $\mathbf{a}$.  This results in the vector $\mathbf{v}\in\mathbb{R}^N$, whose $i$-th component is given by $f(x_i)$, where $f\in\mathcal{G}$.  Now suppose that $A(G,X)$ is non-invertible.  Then the homogeneous system $A(G,X)\mathbf{a}=\mathbf{0}$ has a non trivial solution $\mathbf{a}_0$, which leads to $f_0\in\mathcal{G}$ that has $N$ roots on $I$.  This contradicts the fact that $\mathcal{M}<N$, which shows that $A(G,X)$ must be invertible.  
\end{proof}

\begin{lem}\label{derivative_roots}
Suppose that $f\in C^1(I)$ and that $f'$ has $n$ distinct roots in $I$.  Then $f$ has at most $n+1$ roots in $I$.
\end{lem}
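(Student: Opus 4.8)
The plan is to argue by contradiction using Rolle's theorem. Suppose, contrary to the claim, that $f$ has at least $n+2$ roots in $I$. Since these are finitely many distinct points, I can list and order them as $a_0<a_1<\cdots<a_{n+1}$, giving $n+1$ consecutive pairs $(a_{k-1},a_k)$ for $1\leq k\leq n+1$.

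Next I would apply Rolle's theorem on each closed subinterval $[a_{k-1},a_k]$. This is justified because $f\in C^1(I)$, so $f$ is continuous on $[a_{k-1},a_k]$ and differentiable on $(a_{k-1},a_k)$, and moreover $f(a_{k-1})=f(a_k)=0$ since both endpoints are roots. Rolle's theorem then produces a point $c_k\in(a_{k-1},a_k)$ with $f'(c_k)=0$. Carrying this out for each $k$ yields $n+1$ roots $c_1,\dots,c_{n+1}$ of $f'$.

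The final step is to observe that these roots are pairwise distinct: each $c_k$ lies in the open interval $(a_{k-1},a_k)$, and consecutive such intervals share at most the endpoint $a_k$, which no $c_j$ equals. Hence $c_1,\dots,c_{n+1}$ are $n+1$ distinct roots of $f'$ in $I$, contradicting the hypothesis that $f'$ has only $n$ distinct roots. Therefore $f$ cannot have $n+2$ or more roots, so $f$ has at most $n+1$ roots in $I$.

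I do not anticipate a serious obstacle here, as the result is a direct consequence of Rolle's theorem; the only point demanding mild care is verifying that the $c_k$ are genuinely distinct, which is immediate from their placement in disjoint open intervals.
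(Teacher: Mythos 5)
Your proof is correct, but it takes a genuinely different route from the paper's. The paper argues directly: the $n$ roots of $f'$ partition $I$ into $n+1$ subintervals, and on each subinterval $f$ is monotone (since $f'$ is continuous and nonvanishing in the interior, it has constant sign there), so $f$ has at most one root per piece, giving at most $n+1$ roots in all. You argue the contrapositive direction by contradiction via Rolle's theorem: $n+2$ ordered roots $a_0<\cdots<a_{n+1}$ of $f$ would produce points $c_k\in(a_{k-1},a_k)$ with $f'(c_k)=0$, and these are distinct because the open intervals are pairwise disjoint, contradicting that $f'$ has only $n$ roots. The two arguments are dual and both elementary, but yours is arguably tighter as written: the paper's one-line appeal to monotonicity really requires \emph{strict} monotonicity (a merely monotone piece could vanish on a whole subinterval), which in turn needs the constant-sign observation about $f'$ that the paper leaves implicit; your Rolle argument sidesteps that issue entirely, and it also handles without comment the possibility that roots of $f$ coincide with roots of $f'$. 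One small wording fix: when you suppose $f$ has at least $n+2$ roots, you should simply \emph{select} $n+2$ of them and order those; the phrase ``these are finitely many distinct points'' should refer to that selection, since a priori $f$ could have infinitely many roots.
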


\begin{proof}
We partition $I$ into $n+1$ subintervals with the roots of $f'$ as endpoints.  Since $f\in C^1(I)$, $f$ is monotone on each subinterval, so that there are at most $n+1$ roots of $f$.  
\end{proof}

\section{Examples}

\subsection{power functions}

For a fixed $r\in\mathbb{R}$, consider the set of functions
\[
G_N:=\{x^{r+j-1}:1 \leq j \leq N \},
\]
defined on the interval $(0,\infty)$.  If $f\in\mathcal{G}_N\setminus\{\mathbf{0}\}$, we have $f(x)=x^{r}\tilde{f}(x)$, where $\tilde{f}\in \Pi_{N-1}$.  Thus $f$ has at most $N-1$ roots, hence $\mathcal{G}_{N}^\sharp\leq N-1$.  To see that $\mathcal{G}_{N}^\sharp= N-1$, we let $\tilde{f}=(x-x_1)(x-x_2)\cdots(x-x_{N-1})$  

\begin{remark}
The Vandermonde system corresponding to $V_N$ above is the special case $r=0$.
\end{remark}

\subsection{logarithms and polynomials}
Throughout this subsection, we set 
\[
\mathcal{H}_N:=\{\ln(x)p(x)+q(x): p,q\in\Pi_{N-1} \},
\]
and restrict our attention to the interval $x\in (1,\infty)$.  We note that $|H_N|=2N$.  We begin with two derivative formulas.

\begin{lem}\label{log_derivative_1}
For $k\in\mathbb{N}_0$, we have 
\[
D^{k+1}\left( x^k\ln(x) \right) = k!x^{-1}.
\]
\end{lem}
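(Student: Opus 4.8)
The plan is to argue by induction on $k$. For the base case $k=0$ the claim reads $D(\ln x) = x^{-1}$, which is immediate since $0!=1$. For the inductive step, I would assume the formula holds for some $k\in\mathbb{N}_0$ and deduce it for $k+1$; that is, I want $D^{k+2}\bigl(x^{k+1}\ln x\bigr) = (k+1)!\,x^{-1}$.

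The key is to peel off a single derivative before invoking the hypothesis. The product rule gives
\[
D\bigl(x^{k+1}\ln x\bigr) = (k+1)x^{k}\ln x + x^{k}.
\]
Applying $D^{k+1}$ to both sides and using linearity, the right-hand side splits into $(k+1)\,D^{k+1}(x^{k}\ln x)$ plus $D^{k+1}(x^{k})$. The first term equals $(k+1)\cdot k!\,x^{-1} = (k+1)!\,x^{-1}$ by the inductive hypothesis, while the second term vanishes because $x^{k}$ is a polynomial of degree $k<k+1$, so its $(k+1)$-th derivative is zero. This closes the induction. There is no genuine obstacle here; the only point to watch is that differentiating the product once produces a purely polynomial remainder $x^{k}$, which is precisely what is annihilated by the remaining derivatives.

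As an alternative, one could apply the general Leibniz rule to $x^{k}\ln x$ directly, using $D^{m}(\ln x) = (-1)^{m-1}(m-1)!\,x^{-m}$ for $m\geq 1$ together with $D^{j}(x^{k}) = \tfrac{k!}{(k-j)!}\,x^{k-j}$ for $j\leq k$. The $j=k+1$ term drops out since $D^{k+1}(x^{k})=0$, and every surviving term carries the common factor $k!\,x^{-1}$, so the claim reduces to the binomial identity $\sum_{j=0}^{k}\binom{k+1}{j}(-1)^{k-j} = 1$; this in turn follows from $\sum_{j=0}^{k+1}\binom{k+1}{j}(-1)^{j} = 0$ by isolating the $j=k+1$ term. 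On this route the combinatorial identity is the only step needing care, so I regard the induction as the cleaner path and the one I would write up.
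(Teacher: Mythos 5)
Your inductive argument is correct and is essentially identical to the paper's proof: both peel off one derivative via the product rule to get $D\bigl(x^{k+1}\ln x\bigr) = (k+1)x^{k}\ln x + x^{k}$, apply the inductive hypothesis to the first term, and kill the polynomial remainder with the remaining $k+1$ derivatives. The Leibniz-rule alternative you sketch is also valid, but since you chose the induction as your write-up, your proof matches the paper's in both structure and detail.
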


\begin{proof}
We induct on $k\in\mathbb{N}_0$.  For $k=0$, we just get the familiar derivative formula for the logarithm
\[
\left( \ln(x) \right)' = \dfrac{1}{x}.
\]
Now assume that the formula holds for some $k\geq 0$.  We have
\begin{align*}
   D^{k+2}\left( x^{k+1}\ln(x) \right) &=  D^{k+1} \dfrac{{\rm d}}{{\rm d}x}\left(x (x^k \ln(x) )\right) \\
    &=D^{k+1} \left( x^k\ln(x) + x(kx^{k-1}\ln(x)+x^{k-1}) \right)\\
    &=D^{k+1}\left( (k+1)x^k\ln(x) +x^k\right)\\
    &=(k+1)!x^{-1},
\end{align*}
as desired.
\end{proof}

\begin{lem}\label{log_derivative_2}
Let $N\geq 2$ and suppose that $p\in\Pi_{N-1}$, with 
\[
p(x)=\sum_{k=0}^{N-1}a_k x^k.
\]
Then
\[
D^{N} \left( p(x)\ln(x) \right) = x^{-N}\sum_{j=0}^{N-1}(-1)^{N-1+j}c_j a_j x^{j},
\]
for some positive constants $c_j$.
\end{lem}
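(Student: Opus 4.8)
The plan is to exploit the linearity of the operator $D^N$ together with the formula established in Lemma \ref{log_derivative_1}. Writing $p(x)\ln(x)=\sum_{k=0}^{N-1}a_k x^k\ln(x)$, it suffices by linearity to compute $D^N\bigl(x^k\ln(x)\bigr)$ for each $0\leq k\leq N-1$ and then reassemble the sum with the coefficients $a_k$.

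First I would invoke Lemma \ref{log_derivative_1}, which gives $D^{k+1}\bigl(x^k\ln(x)\bigr)=k!\,x^{-1}$. Since $k\leq N-1$, there remain $N-(k+1)=N-k-1\geq 0$ derivatives to apply, and these now act on the elementary function $k!\,x^{-1}$, which no longer involves a logarithm. Using the power rule $D^m\bigl(x^{-1}\bigr)=(-1)^m m!\,x^{-1-m}$, I would obtain
\[
D^N\bigl(x^k\ln(x)\bigr)=k!\,D^{N-k-1}\bigl(x^{-1}\bigr)=k!\,(-1)^{N-k-1}(N-k-1)!\,x^{-(N-k)}.
\]

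Next I would rewrite $x^{-(N-k)}=x^{-N}x^k$ and set $c_k=k!\,(N-k-1)!$, which is manifestly positive for every $0\leq k\leq N-1$. Summing against $a_k$ and relabeling $j=k$ then yields the claimed expression, provided the sign matches. The one point requiring care—and the only genuine obstacle—is verifying that the factor $(-1)^{N-k-1}$ coincides with the target sign $(-1)^{N-1+j}$. This follows from the identity $(-1)^{-k}=(-1)^{k}$, so that $(-1)^{N-k-1}=(-1)^{N-1}(-1)^{k}=(-1)^{N-1+j}$, and the two forms agree.

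As a consistency check, I would verify the boundary case $k=N-1$ separately: here $N-k-1=0$, so no further derivatives are applied and the expression reduces directly to $(N-1)!\,x^{-1}$, matching the general formula with $c_{N-1}=(N-1)!$ and $(-1)^{N-1+(N-1)}=1$. Assembling the pieces gives $D^N\bigl(p(x)\ln(x)\bigr)=x^{-N}\sum_{j=0}^{N-1}(-1)^{N-1+j}c_j a_j x^{j}$ with $c_j=j!\,(N-j-1)!>0$, as required.
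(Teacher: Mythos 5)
Your proof is correct, and it takes a genuinely different route from the paper's. The paper argues by induction on $N$: it splits $p(x)\ln(x)$ into the leading term $a_N x^N\ln(x)$, which is handled via Lemma \ref{log_derivative_1}, plus a lower-degree part $q(x)\ln(x)$, to which it applies the inductive hypothesis and then one more differentiation, tracking only the signs and the positivity of the constants through the recursion $\tilde c_j = c_j(N-j)$. You instead compute each monomial directly: Lemma \ref{log_derivative_1} eliminates the logarithm after $k+1$ derivatives, and the remaining $N-k-1$ derivatives act on $k!\,x^{-1}$ via the power rule, giving $D^N\bigl(x^k\ln(x)\bigr) = (-1)^{N-k-1}k!\,(N-k-1)!\,x^{k-N}$; linearity of $D^N$ then assembles the sum, and the parity identity $(-1)^{-k}=(-1)^k$ reconciles your sign with the stated $(-1)^{N-1+j}$. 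Your computation and sign check are both valid (the boundary case $k=N-1$ is covered since $D^0$ is the identity). What your approach buys is the explicit closed form $c_j = j!\,(N-j-1)!$, which the paper's induction never exhibits, and a proof with no induction beyond the one already inside Lemma \ref{log_derivative_1}; what the paper's approach buys is stylistic uniformity with the surrounding lemmas (all proved by induction) and avoidance of the auxiliary formula $D^m\bigl(x^{-1}\bigr)=(-1)^m m!\,x^{-1-m}$, which your argument quietly assumes and which, strictly speaking, deserves its own one-line induction. Yours is arguably the more informative proof.
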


\begin{proof}
We induct on $N\geq 2$.  Two applications of the product rule yields the base case:
\[
\left( (ax+b)\ln(x)\right)'' = \dfrac{ax-b}{x^2}.
\]
Now we assume that the conclusion holds for all $k$ with $2\leq k\leq N$.  Consider $p\in\Pi_{N}$. We have
\[
p(x)\ln(x)=\left(a_{N}x^{N}+q(x)\right)\ln(x),
\]
so that
\begin{align*}
D^{N+1} \left( p(x)\ln(x) \right)  &= a_N D^{N} \left( x^N\ln(x) \right) + D^{N+1} \left( q(x)\ln(x) \right) \\
&=a_N D^{N+1} \left( x^N\ln(x) \right) + D\left(x^{-N}\sum_{j=0}^{N-1}(-1)^{N-1+j}c_j a_j x^{j}\right)\\
&=N!a_Nx^{-1}+\sum_{j=0}^{N-1}(-1)^{N-1+j}c_j(j-N) a_j x^{j-N-1}\\
&=N!a_Nx^{-1}+\sum_{j=0}^{N-1}(-1)^{N+j}c_j(N-j) a_j x^{j-N-1}\\
&=x^{-N-1}\left( N!a_Nx^{N}+\sum_{j=0}^{N-1}(-1)^{N+j}c_j(N-j) a_j x^{j-N-1}  \right)\\
&=x^{-N-1}\sum_{j=0}^{N}(-1)^{N+j}\tilde{c}_ja_j x^j
\end{align*}
We've used Lemma \ref{log_derivative_1} in the third line.  The result follows from the fact that $c_j>0$ and $N-j>0$, so that $\tilde{c}_j>0$. 
\end{proof}

\begin{remark}
The point of this calculation is to show that the polynomial in question is an \emph{alternating} combination of the original.
\end{remark}

\begin{lem}
If $f\in\mathcal{H}_N\setminus\{\mathbf{0} \}$, then $f$ has at most $2N-1$ roots.  That is, $\mathcal{H}_N^\sharp\leq 2N-1$.
\end{lem}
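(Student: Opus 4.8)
The plan is to differentiate $f$ exactly $N$ times so as to annihilate the polynomial part $q$ and collapse the logarithmic part to a rational function with an honest polynomial numerator, and then to climb back up to $f$ by $N$ applications of the Rolle-type Lemma \ref{derivative_roots}.

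First I would write $f(x)=\ln(x)p(x)+q(x)$ with $p(x)=\sum_{k=0}^{N-1}a_kx^k$ and $q\in\Pi_{N-1}$. Since $\deg q\leq N-1$ we have $D^Nq=0$, so $D^Nf=D^N(\ln(x)p(x))$. By Lemma \ref{log_derivative_2} (which requires $N\geq 2$; the case $N=1$ I would dispatch directly, since there $f=a\ln(x)+b$ has derivative $a/x$ with no root on $(1,\infty)$ when $a\neq 0$, hence at most one root by Lemma \ref{derivative_roots}), this $N$-th derivative equals $x^{-N}P(x)$, where $P(x)=\sum_{j=0}^{N-1}(-1)^{N-1+j}c_ja_jx^j$ with all $c_j>0$.

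The crucial observation is that on $(1,\infty)$ the factor $x^{-N}$ never vanishes, so the roots of $D^Nf$ coincide with those of $P$. Because $c_j>0$, the coefficient $(-1)^{N-1+j}c_ja_j$ of $x^j$ in $P$ vanishes exactly when $a_j=0$; hence $P\equiv 0$ precisely when $p\equiv 0$. If $p\equiv 0$, then $f=q$ is a nonzero element of $\Pi_{N-1}$ and so has at most $N-1\leq 2N-1$ roots, and we are done; thus I may assume $p\not\equiv 0$. In that case $P$ is a nonzero polynomial of degree at most $N-1$, so $D^Nf$ has at most $N-1$ distinct roots on $(1,\infty)$.

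Finally I would run the induction downward on the order of the derivative. Each $D^if$ is $C^1$ on $(1,\infty)$, so Lemma \ref{derivative_roots} applies at every stage: if $D^if$ has $m$ distinct roots, then $D^{i-1}f$ has at most $m+1$. Starting from the bound $N-1$ for $D^Nf$ and applying this $N$ times yields that $f=D^0f$ has at most $(N-1)+N=2N-1$ roots. I do not expect a serious obstacle; the only point demanding care is to confirm that no spurious cancellation drops $P$ to the zero polynomial when $p\not\equiv 0$, and this is exactly what the positivity of the constants $c_j$ in Lemma \ref{log_derivative_2} guarantees — precisely the role of the \emph{alternating} structure emphasized in the preceding remark.
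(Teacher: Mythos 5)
Your proof is correct and follows essentially the same route as the paper: differentiate $N$ times, invoke Lemma \ref{log_derivative_2} together with the positivity of $x^{-N}$ on $(1,\infty)$ to bound the roots of $D^N f$ by $N-1$, then recover the bound $2N-1$ for $f$ via $N$ applications of Lemma \ref{derivative_roots}. You are in fact slightly more careful than the paper, which does not separately treat the cases $N=1$ (where Lemma \ref{log_derivative_2} does not apply as stated) and $p\equiv 0$ (where $D^N f\equiv 0$, so the claim that $f^{(N)}$ has at most $N-1$ roots fails, though the conclusion survives since $f=q\in\Pi_{N-1}$); your handling of these edge cases closes minor gaps in the paper's argument.
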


\begin{proof}
If $f\in\mathcal{H}_N\setminus\{\mathbf{0} \}$ with 
\[
f(x)= \ln(x)\sum_{k=0}^{N-1}a_k x^k + \sum_{m=0}^{N-1}b_m x^m,
\]
then Lemma \ref{log_derivative_2} provides 
\[
D^N f(x)=x^{-N}\sum_{j=0}^{N-1}(-1)^{N-1+j}c_j a_j x^{j}.
\]
Since $x^{-N}>0$ on $(1,\infty)$, $f^{(N)}$ has at most $N-1$ roots.  Since $f\in C^{\infty}(1,\infty)$ we can use Lemma \ref{derivative_roots} repeatedly to conclude that $f$ has at most $2N-1$ roots. 
\end{proof}

\subsection{a general example}

In this section, we look for conditions on a collection of functions such that that guarantee at most a specified number of roots.  This problem has been studied before, see \cite{Haukkanen} for a recent example.  Our example arises from a problem in approximation theory, as a result it is more specialized.  Our collection takes the form  
\[
\mathcal{F}_{m,n}:=\{p(x)+F(x)q(x): p\in\Pi_m,q\in \Pi_{n} \},
\]
and we seek conditions on a function $F$ such that $\mathcal{F}_{m,n}^{\sharp}<m+n+2$.  To this end, we introduce the notion of compatibility.

\begin{defn}
For  $k,n,l\in\mathbb{N}_0$, we say the function $F$ is \emph{$k$-compatible with $\Pi_n$ of degree $l$} on an interval $I$ if there exists a function $\tilde{F}$ (depending on $k$ and $n$) such that
\begin{enumerate}
    \item $F\in C^{k+1}(I)$, 
    \item for all $q\in \Pi_n$, $D^k\left( Fq  \right) = \tilde{F}\tilde{q}$,
    \item $\tilde{F}$ is monotone on $I$ and $\tilde{F}(x)\neq 0$, and
    \item $\tilde{q}$ has at most $l$ roots in $I$.\\
\end{enumerate}

\end{defn}

\begin{remark}
This definition is motivated by the example in the previous section, which shows that $F(x)=\ln(x)$ is $N$-compatible with $\Pi_{N-1}$ of degree $N-1$, with $\tilde{F}(x)=x^{-N}$ on $(1,\infty)$ and $\tilde{q}\in\Pi_{N-1}$.
\end{remark}

\begin{remark}
The fourth condition gives us a bit of flexibility in counting the roots.  For instance if $\tilde{q}\in\Pi$, we could appeal to Descartes' rule of signs or the Budan-Fourier theorem to count the roots.
\end{remark}

\begin{prop}
Let $k,l,m,n\in \mathbb{N}_0$ and suppose that $F$ is $k$-compatible with $\Pi_n$ of degree $l$ on $I$.  If $k>m$, then $\mathcal{F}_{m,n}^{\sharp}\leq k+l$.  If, additionally, $l< n-(k-m)+2$, then $\mathcal{F}_{m,n}^{\sharp}< m+n+2$.
\end{prop}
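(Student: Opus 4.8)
The plan is to take an arbitrary nonzero $f\in\mathcal{F}_{m,n}$, write it as $f=p+Fq$ with $p\in\Pi_m$ and $q\in\Pi_n$, and control its roots by differentiating $k$ times to kill the polynomial part, and then undoing the differentiation via Lemma \ref{derivative_roots}. I would first dispose of the degenerate case $q=0$: then $f=p$ is a nonzero polynomial of degree at most $m$, hence has at most $m$ roots, and since $k>m$ we have $m<k\le k+l$, which is consistent with the claimed bound.

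Now assume $q\neq 0$. Differentiating $k$ times and using that $k>m$ forces $D^k p=0$ (the $k$-th derivative of a polynomial of degree at most $m$ vanishes), so $D^k f = D^k(Fq)$. Compatibility condition (2) rewrites this as $D^k f = \tilde{F}\tilde{q}$, and condition (3) guarantees that $\tilde{F}$ never vanishes on $I$, so the roots of $D^k f$ are exactly the roots of $\tilde{q}$; by condition (4) there are at most $l$ of them. The heart of the argument is to propagate this count back down to $f$ itself. Since $F\in C^{k+1}(I)$ by condition (1) and polynomials are smooth, $f\in C^{k+1}(I)$, so each intermediate derivative $D^j f$ with $0\le j\le k$ lies in $C^1(I)$. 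Applying Lemma \ref{derivative_roots} to the pair $(D^{k-1}f, D^k f)$ shows $D^{k-1}f$ has at most $l+1$ roots; iterating $k$ times yields that $f=D^0 f$ has at most $l+k$ roots. Taking the supremum over nonzero $f$ gives $\mathcal{F}_{m,n}^{\sharp}\le k+l$, establishing the first claim.

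For the second claim, the additional hypothesis $l<n-(k-m)+2$ is algebraically equivalent to $k+l<m+n+2$, so it immediately upgrades the bound to $\mathcal{F}_{m,n}^{\sharp}\le k+l<m+n+2$. The main obstacle I anticipate is bookkeeping rather than conceptual: I must verify that the hypotheses of Lemma \ref{derivative_roots} (the $C^1$ regularity and the finiteness of the root count) genuinely hold at each of the $k$ stages of the iteration, and handle the $q=0$ branch carefully so that the bound $k+l$ still dominates there. The key structural insight supplied by the compatibility definition is that condition (3) converts the potentially complicated zero set of $D^k f$ into the zero set of $\tilde{q}$ alone, which is precisely what makes condition (4) usable.
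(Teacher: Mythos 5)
Your proof is correct and follows essentially the same route as the paper: since $k>m$ the $k$-th derivative kills $p$, compatibility conditions (2)--(4) bound the roots of $D^k f=\tilde{F}\tilde{q}$ by $l$, and $k$ applications of Lemma \ref{derivative_roots} propagate this to the bound $k+l$ for $f$, with the second claim being pure arithmetic. Your extra care with the $q=0$ case and the $C^1$ regularity at each stage is sound bookkeeping that the paper's one-line proof leaves implicit, but it does not change the argument.
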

\begin{proof}
Since $k>m$, $D^k\left(p(x)+F(x)q(x) \right) = D^k\left( F(x)q(x)\right) = \tilde{F}(x)\tilde{q}(x)$, which has at most $l$ roots.  Applying Lemma \ref{derivative_roots} repeatedly gives us that $p(x)+F(x)q(x)$ has at most $k+l$ roots, hence $\mathcal{F}_{m,n}^{\sharp}\leq k+l$.  The additional assumption $l<n-(k-m)+2$ is equivalent to $k+l<m+n+2$.
\end{proof}

We conclude this section with a general version of the example above.  Let $i,m,n\in\mathbb{N}_{0}$, $I:=(1,\infty)$, and define
\[
F_{i,m,n}:=\left\{ p(x)+x^i\ln(x)q(x): p\in\Pi_m,q\in \Pi_{n} \right\}.
\]
Lemmas \ref{log_derivative_1} and \ref{log_derivative_2} provide
\[
D^{n+i+1}(x^i\ln(x)q(x)) =  x^{-(n+1)}\sum_{j=0}^{n}(-1)^{n+j}c_{i+j} a_{i+j} x^{j},
\]
where $q(x)=\sum_{j=0}^{n}a_{i+j}x^{i+j}$ and $c_{i+j}>0$.  So if all of the coefficients $a_{i+j}>0$ or $a_{i+j}<0$ Descartes' rule of signs gives that $x^i\ln(x)$ is $(n+i+1)$-compatible with $\Pi_n$ of degree $n$. The proposition provides that if $m< n+i+1 < m+2$, then $\mathcal{F}_{m,n}^{\sharp}< m+n+2$.  This condition reduces to $m=n+i$.    However, we can improve this result using the following lemma.

\begin{lem}\label{log_derivative_3}
For $k\in\mathbb{N}$, we have 
\[
D^{k}\left( x^k\ln(x) \right) = k!\ln(x)+C_k,
\]
for some positive constant $C_k$.
\end{lem}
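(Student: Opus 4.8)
The plan is to induct on $k$, mirroring the structure of the proof of Lemma \ref{log_derivative_1}. The essential subtlety is that one cannot naively induct on the order of differentiation alone, since both the power $x^k$ and the number of derivatives increase simultaneously. The trick I would use is to peel off a single factor of $x$, differentiate once, and thereby reduce $D^{k+1}\!\left(x^{k+1}\ln(x)\right)$ to an expression to which the order-$k$ inductive hypothesis applies.

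For the base case $k=1$, a direct computation gives $D(x\ln(x)) = \ln(x) + 1$, so the claim holds with $C_1 = 1 > 0$.

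For the inductive step, I would assume $D^k(x^k\ln(x)) = k!\ln(x) + C_k$ for some $C_k > 0$, write $x^{k+1}\ln(x) = x\cdot\left(x^k\ln(x)\right)$, and apply the product rule to compute the first derivative, obtaining
\[
D\left(x^{k+1}\ln(x)\right) = (k+1)x^k\ln(x) + x^k.
\]
Applying $D^k$ to both sides and using linearity, the inductive hypothesis, and the elementary fact $D^k(x^k) = k!$ then yields
\[
D^{k+1}\left(x^{k+1}\ln(x)\right) = (k+1)\left(k!\ln(x) + C_k\right) + k! = (k+1)!\,\ln(x) + \left[(k+1)C_k + k!\right],
\]
which is exactly of the claimed form with $C_{k+1} = (k+1)C_k + k!$.

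Positivity is then immediate: since $C_k > 0$ and $k! > 0$, the recurrence forces $C_{k+1} > 0$, closing the induction, and no closed form for $C_k$ is needed. I would remark in passing that solving the recurrence gives $C_k = k!\,H_k$, where $H_k = \sum_{i=1}^{k} 1/i$ is the $k$-th harmonic number, which reconfirms positivity. I do not anticipate any serious obstacle; the only point requiring care is setting up the induction so that differentiating once reduces the problem to the order-$k$ case, rather than attempting a direct order-$(k+1)$ computation via the Leibniz rule.
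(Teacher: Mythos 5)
Your proof is correct and follows essentially the same route as the paper: both induct on $k$, peel off one factor of $x$ to compute the first derivative via the product rule, and then apply $D^k$ together with the inductive hypothesis and $D^k(x^k)=k!$ to obtain the recurrence $C_{k+1}=(k+1)C_k+k!$. The closed form $C_k=k!\,H_k$ you note is a pleasant extra, but the argument is the paper's argument.
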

\begin{proof}
We induct on $k\in\mathbb{N}$.  The result for $k=1$ follows from the Leibniz rule
\[
\left(x\ln(x)\right)'=\ln(x)+1.
\]  
Now suppose that the formula holds for some $k\in\mathbb{N}$.  Consider
\begin{align*}
D^{k+1}\left( x^{k+1}\ln(x)\right)&= D^{k}\left( D\left(xx^k\ln(x)\right)\right)     \\
&=D^{k}\left((k+1)x^k\ln(x)+ x^k \right)\\
&=(k+1)!\ln(x)+(k+1)C_k+k!\\
&=:(k+1)!\ln(x)+\tilde{C}_{k+1}.
\end{align*}
\end{proof}
Hence if $m<n+i$, we have
\begin{align*}
D^{n+i}\left( p(x)+x^i\ln(x)q(x)\right) &=   D^{n+i}\left( x^i\ln(x)q(x)\right)\\
&= (n+i)!\ln(x)+C_{n+i} +D^{n+i}\left( x^i\ln(x)w(x) \right),
\end{align*}
where $w(x)=q(x)-a_nx^n\in \Pi_{n-1}$.  Now Lemmas  \ref{log_derivative_1} and \ref{log_derivative_2} give us
\begin{align*}
D^{n+i}\left( p(x)+x^i\ln(x)q(x)\right)&= (n+i)!\ln(x)+C_{n+i} +D^{n+i}\left( x^i\ln(x)w(x) \right)\\
&=(n+i)!\ln(x)+C_{n+i} + x^{-n}\sum_{j=0}^{n-1}(-1)^{n+j-1}c_{i+j}a_{i+j}x^{j}\\
&=:x^{-n}G(x),
\end{align*}
where 
\[
G(x):= (n+i)!x^n\ln(x)+C_{n+i}x^n + \sum_{j=0}^{n-1}(-1)^{n+j-1}c_{i+j}a_{i+j}x^{j} .
\]
\begin{remark}
Note that the two leading terms in the polynomial part of $G$ share the same sign.
\end{remark}
Now Lemma \ref{log_derivative_3} shows that for $x\in I$
\[
D^nG(x) = (n+i)!n!\ln(x)+C_n+n!C_{n+i}>0,
\]
hence $p(x)+x^i\ln(x)q(x)$ has at most $2n+i$ roots provided $m<n+i$.  So to make sure $\mathcal{F}_{m,n}^{\sharp}<m+n+2$, we must have $m<n+i<m+2$, that is
$m=n+i-1$.

\subsection{$m>n$}
Consider the collection
\[
\mathcal{F}_{m,n}:=\{p(x)+\ln(x)q(x): p\in\Pi_m,q\in \Pi_{n} \},
\]
where $m>n$.  Since $\ln(x)$ is $n+1$ compatible with $\Pi_n$ of degree $n$ with $\widetilde{\ln}(x)=x^{-n-1}$, $f\in \mathcal{F}_{m,n}$ is at most $m+n+1$ to $1$.  Thus $\mathcal{F}_{m,n}^{\sharp}\leq m+n+1$.

\end{document}